\numberwithin{equation}{section}
\theoremstyle{plain}
\newtheorem{theorem}{Theorem}[section]
\newtheorem{lemma}[theorem]{Lemma}
\theoremstyle{definition}
\newtheorem{remark}[theorem]{Remark}
\newcommand\R{\mathbb{R}}
\begin{document}

\title{A Maclaurin type inequality}

\author{Terence Tao}
\address{UCLA Department of Mathematics, Los Angeles, CA 90095-1555.}
\email{tao@math.ucla.edu}

\subjclass[2020]{26D15}

\begin{abstract}  The classical Maclaurin inequality asserts that the elementary symmetric means
$$ s_k(y) \coloneqq \frac{1}{\binom{n}{k}} \sum_{1 \leq i_1 < \dots < i_k \leq n} y_{i_1} \dots y_{i_k}$$
obey the inequality $s_\ell(y)^{1/\ell} \leq s_k(y)^{1/k}$ whenever $1 \leq k \leq \ell \leq n$ and $y = (y_1,\dots,y_n)$ consists of non-negative reals. We establish a variant
$$ |s_\ell(y)|^{\frac{1}{\ell}} \ll \frac{\ell^{1/2}}{k^{1/2}} \max (|s_k(y)|^{\frac{1}{k}}, |s_{k+1}(y)|^{\frac{1}{k+1}})$$
of this inequality in which the $y_i$ are permitted to be negative.  In this regime the inequality is sharp up to constants.  Such an inequality was previously known without the $k^{1/2}$ factor in the denominator.
\end{abstract}

\maketitle


\section{Introduction}

Given $n$ real numbers $y = (y_1,\dots,y_n) \in \R^n$ and $0 \leq k \leq n$, let $s_k(y)$ denote the elementary symmetric means
$$ s_k(y) \coloneqq \frac{1}{\binom{n}{k}} \sum_{1 \leq i_1 < \dots < i_k \leq n} y_{i_1} \dots y_{i_k}$$
(thus for instance $s_0(y)=1$).  These means arise as normalized coefficients of the degree $n$ polynomial with roots $y_1,\dots,y_n$:
\begin{equation}\label{poly}
 \prod_{i=1}^n (z-y_i) = \sum_{k=0}^n (-1)^k \binom{n}{k} s_k(y) z^{n-k}.
\end{equation}
We call a $n+1$-tuple $(s_0,\dots,s_n)$ of real numbers \emph{attainable} if it is of the form $(s_0(y),\dots,s_n(y))$ for some $y$, or equivalently if the polynomial $\sum_{k=0}^n (-1)^k \binom{n}{k} s_k z^{n-k}$ is monic with all roots real.  For example, the attainable $3$-tuples take the form
$$ (s_0,s_1,s_2) = \left(1, \frac{y_1+y_2}{2}, y_1 y_2\right)$$
and range in the set $\{ (s_0,s_1,s_2): s_0=1, s_2 \leq s_1^2 \}$.  More generally, for any $n$ the set of attainable $n+1$-tuples is some closed semi-algebraic set \cite{sylvester}; see \cite[\S 3]{niculescu} for an explicit description in the cases $n=3,4$, which is related to the discriminant of a degree $n$ polynomial.  As another example, by taking all the $y_i$ to be $1$, we see that the $n+1$-tuple $(1,\dots,1)$ is attainable for any $n$.

There are several inequalities known for attainable tuples $(s_0,\dots,s_n)$, most famously the classical \emph{Newton inequality} \cite{newton}
\begin{equation}\label{newton}
 s_{k-1} s_{k+1} \leq s_k^2
\end{equation}
for any $1 \leq k < n$.  If the underyling real variables $y_1,\dots,y_n$ are non-negative, the Newton inequalities can be used to easily establish the well-known \emph{Maclaurin inequalities} \cite[pp. 80-81]{maclaurin}
\begin{equation}\label{maclaurin}
 s_1 \geq s_2^{\frac{1}{2}} \geq \dots \geq s_n^{\frac{1}{n}}.
\end{equation}
Many further inequalities in the non-negative case are known; a classical reference in this regard is \cite{polya}, and some more recently discovered inequalities may be found for instance in \cite{mitrinovic}, \cite{sato}, \cite{mitev}, \cite{mc}, \cite{fw}, \cite{cgs}, \cite{tin}.  We also note the inequalities of Lin and Trudinger \cite{lint} which apply when a certain specified number of the $y_i$ (or the $s_k$) are known to be positive, and which have numerous applications to the study of partial differential equations.

Returning to the case of arbitrary real $y_1,\dots,y_n$, the Newton inequalities were strengthened by Rosset \cite{rosset} to the cubic Newton inequalities
$$ 4 (s_{k+2}^2 - s_{k+1} s_{k+3}) ( s_{k+1}^2 - s_k s_{k+2} ) \geq (s_{k+1} s_{k+2} - s_k s_{k+3})^2,$$
valid for $0 \leq k \leq n-3$ and arbitrary attainable $(s_0,\dots,s_n)$.  A further quartic strengthening of this inequality was then established in \cite{niculescu}; these inequalities are related to the solution by radicals of the general cubic and quartic respectively. Some further inequalities between the $s_k$ may be found in (or derived from) \cite{hunter}, \cite{timofte, timofte-2, timofte-3}, \cite{rukhin}, \cite{nicu-2}.  On the other hand, the Maclaurin inequality \eqref{maclaurin} can fail in general once the $y_i$ are not required to be non-negative.  Indeed, it is possible for an intermediate entry $s_k$ in an attainable tuple $(s_0,\dots,s_n)$ to vanish without the subsequent entries $s_{k+1},\dots,s_n$ vanishing.  A key example occurs when $n$ is even, and the $y_1,\dots,y_n$ take values in $\{-1,+1\}$, with exactly $n/2$ of the $y_i$ equal to each of the signs $+1, -1$.  From the binomial expansion
$$ (x-1)^{n/2} (x+1)^{n/2} = (x^2-1)^{n/2} = \sum_{j=0}^{n/2} (-1)^j \binom{n/2}{j} x^{2j}$$
and \eqref{poly} we see that
\begin{equation}\label{sk-form}
s_k = s_k(y_1,\dots,y_n) = (-1)^{k/2} \frac{\binom{n/2}{k/2}}{\binom{n}{k}} 1_{k \text{ even}}.
\end{equation}
Using the standard inequalities
\begin{equation}\label{standard}
    \frac{n^k}{k^k} \leq \binom{n}{k} \leq \frac{n^k}{k!}
\end{equation}
for all $1 \leq k \leq n$, which follow immediately from rearranging the obvious inequalities
   $$ \prod_{j=0}^{k-1} (1 - \frac{j}{k}) \leq \prod_{j=0}^{k-1} (1 - \frac{j}{n}) \leq 1,$$
combined with the weak Stirling bounds
\begin{equation}\label{weak-stirling}
 \frac{k^k}{e^k} \leq k! \leq k^k
\end{equation}
(where the first inequality is immediate from the Taylor expansion $e^k = \sum_{j=0}^k \frac{k^j}{j!} \geq \frac{k^k}{k!} $ of $e^k$) we thus see for this specific example that
\begin{equation}\label{k-even}
   |s_k|^{\frac{1}{k}} \asymp \frac{k^{1/2}}{n^{1/2}} 1_{k \text{ even}}
\end{equation}
for $1 \leq k \leq n$. Here we use the standard asymptotic notation $X \ll Y$, $Y \gg X$ or $X=O(Y)$ to denote the estimate $|X| \leq C Y$ for some absolute constant $C>0$, and $X \asymp Y$ to denote the bound $X \ll Y \ll X$.
Thus we have a breakdown of \eqref{maclaurin} in the example \eqref{sk-form}, even when absolute value signs are placed on the $s_k$: smallness (or even vanishing) of one quantity $|s_k|^{\frac{1}{k}}$ does not imply smallness of all subsequent quantities $|s_\ell|^{\frac{1}{\ell}}$ for $k < \ell \leq n$.

However, it was observed in\footnote{This assertion is not explicit in \cite{gy}, which focused more on the related question of whether smallness of $|s_{k+1}|/|s_k|$ and $|s_{k+2}|/|s_k|$ implied smallness of $|s_\ell|/|s_k|$ for all larger $\ell$, but as observed by several authors \cite{dhh,mrt}, the methods in \cite{gy} extend to cover the situation discussed here.} \cite{gy} that if \emph{two} consecutive quantities $|s_k|^{\frac{1}{k}}, |s_{k+1}|^{\frac{1}{k+1}}$ were small in magnitude, then the subsequent entries $|s_\ell|^{\frac{1}{\ell}}$ for $k+1 < \ell \leq n$ were also small in magnitude, though with some loss in the bounds.  In fact, as observed in \cite{dhh,mrt}, the arguments in \cite{gy} yield an inequality of the form
\begin{equation}\label{prev-bound}
 |s_\ell|^{\frac{1}{\ell}} \ll \ell^{1/2} \max_{k' = k, k+1} |s_{k'}|^{\frac{1}{k'}}
\end{equation}
whenever $1 \leq k \leq \ell \leq n$.  For the convenience of the reader, we give a proof of \eqref{prev-bound} in Section \ref{prelim-sec}.

A comparison of \eqref{prev-bound} with \eqref{k-even} suggests that the former inequality is not sharp; also, when $\ell = k, k+1$, the factor $\ell^{1/2}$ in \eqref{prev-bound} is clearly unnecessary.
The main result of this note is the following improvement of \eqref{prev-bound}, which was conjectured (in an equivalent form) by Chin Ho Lee\footnote{See \url{https://mathoverflow.net/questions/446254}.}:

\begin{theorem}[Maclaurin type inequality]\label{thm-main}  Let $(s_0,\dots,s_n)$ be an attainable tuple.  Then we have
\begin{equation}\label{main-eq}
        |s_\ell|^{\frac{1}{\ell}} \ll \max_{k' = k, k+1} \left( \frac{\ell}{k'}\right)^{1/2} |s_{k'}|^{\frac{1}{k'}}
        \asymp  \left( \frac{\ell}{k}\right)^{1/2} \max_{k' = k, k+1} |s_{k'}|^{\frac{1}{k'}}
\end{equation}
for all $1 \leq k < \ell \leq n$.
\end{theorem}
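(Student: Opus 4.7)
My plan is to reduce to a Cauchy--Schwarz bound on the symmetric means $\tilde{s}_m := s_m(y_1^2,\dots,y_n^2)$ of the squared roots, after which the theorem becomes a key estimate on $\tilde{s}_k$.

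First, by $n - \ell$ applications of Rolle's theorem to $P(z) := \prod_i(z-y_i)$, the polynomial $\sum_{m=0}^\ell (-1)^m \binom{\ell}{m} s_m z^{\ell-m}$ has all real roots, so $(s_0,\dots,s_\ell)$ is itself attainable and I may assume $n = \ell$. Scaling $y_i \mapsto \lambda y_i$ replaces $s_m$ by $\lambda^m s_m$, so I may normalize $M := \max(|s_k|^{1/k}, |s_{k+1}|^{1/(k+1)}) = 1$; the goal is then $|s_\ell| \ll (\ell/k)^{\ell/2}$. Writing $s_\ell$ as the expectation of $\prod_{i \in I} y_i$ over a uniform $\ell$-subset $I$, Cauchy--Schwarz gives $|s_\ell|^2 \le \tilde{s}_\ell$; since the $y_i^2$ are nonnegative, the classical Maclaurin inequality applied to $(\tilde{s}_m)$ yields $\tilde{s}_\ell^{1/\ell} \le \tilde{s}_k^{1/k}$, and combining these gives $|s_\ell|^{1/\ell} \le \tilde{s}_k^{1/(2k)}$. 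The theorem is thus reduced to the key estimate $\tilde{s}_k \ll (\ell/k)^k$ under the normalization $|s_k|, |s_{k+1}| \le 1$.

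This estimate is sharp, saturated (after rescaling) by the example $y_i \in \{\pm 1\}$ equally split. For $k = 1$, it follows immediately from the Newton--Girard identity $\tilde{s}_1 = \ell s_1^2 - (\ell - 1) s_2 \ll \ell$, which essentially reproduces \eqref{prev-bound} in the case $k = 1$. For $k \ge 2$, comparing coefficients of $z^{2k}$ in the factorization $\prod_i(1 - y_i^2 z^2) = \prod_i(1 - y_i z) \prod_i(1 + y_i z)$ yields the identity
\[ \binom{\ell}{k} \tilde{s}_k = (-1)^k \sum_{a + b = 2k} (-1)^a \binom{\ell}{a} \binom{\ell}{b} s_a s_b, \]
which expresses $\tilde{s}_k$ as a signed sum of products $s_a s_b$. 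The diagonal term $\binom{\ell}{k} s_k^2$ is of the target order (up to an absolute exponential constant), but the off-diagonal pairs involve $s_a$ with $a$ far from $k$, and these are not \emph{a priori} controlled by $M$.

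The main obstacle will be to harness the real-rootedness of $P$ -- via Newton's inequalities $s_{j-1} s_{j+1} \le s_j^2$ and their cubic strengthenings (e.g., that of Rosset) -- together with the positivity $\tilde{s}_k \ge 0$ and the joint smallness of both $|s_k|$ and $|s_{k+1}|$, in order to control the off-diagonal contributions in the signed sum above. The use of two consecutive entries $|s_k|$ and $|s_{k+1}|$ (rather than just one) is precisely the ingredient that is absent from the proof of \eqref{prev-bound} and that accounts for the improved $k^{1/2}$ factor in the denominator of the new bound.
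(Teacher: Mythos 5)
Your Cauchy--Schwarz reduction is correct as a chain of inequalities, but the ``key estimate'' it reduces to is \emph{false}, so this route cannot give the theorem. Concretely, the claim that $|s_k|,|s_{k+1}|\le 1$ forces $\tilde s_k \ll (\ell/k)^k$ fails: take $\ell$ large, $k=3$, and
$$y=(R,-R,R',-R',\epsilon,\dots,\epsilon)\in\R^\ell,\qquad R'=\frac{\sqrt{\binom{\ell}{4}}}{R},$$
with $R\to\infty$ and $\epsilon\to 0$. Then $s_3\to 0$ and $s_4\to 1$, so the normalization $M=1$ holds, but
$$\tilde s_3 = s_3(y_1^2,\dots,y_\ell^2)\approx \frac{2\binom{\ell}{4}\,(R^2+R'^2)}{\binom{\ell}{3}}\to\infty,$$
while $(\ell/3)^3$ is fixed. (The theorem itself is not contradicted because $s_\ell\to 0$; the Maclaurin step $\tilde s_\ell^{1/\ell}\le\tilde s_k^{1/k}$ is simply vacuous here since $\tilde s_\ell\approx 0$.)

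The failure is structural: passing to $\tilde s_m=s_m(y^2)$ destroys all sign cancellation, and the classical Maclaurin inequality only bounds $\tilde s_\ell$ by $\tilde s_k$ from above, so the intermediate quantity $\tilde s_k$ can be far larger than both the hypothesis and the target allow. This cannot be repaired by Newton or Rosset inequalities alone, because the convolution identity for $\tilde s_k$ genuinely involves $s_a$ for all $0\le a\le 2k$, and the hypotheses $|s_k|,|s_{k+1}|\le 1$ place no constraint on the others (the counterexample above exploits exactly the freedom in $s_2$). The paper's argument instead works with the original attainable tuple: the New Inequality (Theorem~\ref{thm-new}), proved via Jensen's inequality applied to $\log|P(ir)|$, gives $\sum_m\binom{n}{m}|s_m|r^m\ge(1+|s_n|^{2/n}r^2)^{n/2}$, and then bounds on \emph{every} $|s_m|$ (not just $m=k,k+1$) are obtained at a near-extremal tuple by bootstrapping through the reflection and truncation symmetries of Lemma~\ref{syms}. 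That bootstrapping is exactly the missing ingredient: it is what controls the $s_a$ with $a$ far from $k$, and there is no obvious way to obtain it without essentially proving the theorem first.
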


This bound is optimal up to constants, as can be seen from \eqref{k-even}. In particular, it may be used to make minor improvements to some of the error bounds in \cite{dhh,mrt}.

\begin{remark}\label{eqv}   The bound \eqref{main-eq} can of course be expressed in terms of the non-normalized elementary symmetric functions
$$ S_k(y) \coloneqq \binom{n}{k} s_k(y) = \sum_{1 \leq i_1 < \dots < i_k \leq n} y_{i_1} \dots y_{i_k}.$$
Indeed, from \eqref{standard}, we see that \eqref{main-eq} is equivalent to the inequality
$$ |S_\ell(y)|^{\frac{1}{\ell}} \ll  \left( \frac{k}{\ell}\right)^{1/2}\max_{k' = k, k+1} |S_{k'}(y)|^{\frac{1}{k'}}$$
for all real $y_1,\dots,y_n$ and all $1 \leq k < \ell \leq n$.  Using the weak Stirling bounds \eqref{weak-stirling}, we may also write the bound \eqref{main-eq} in the following alternate form: if one has the bounds
$$ |S_k(y)|^2 \leq \frac{\theta^k}{k!}$$
and
$$ |S_{k+1}(y)|^2 \leq \frac{\theta^{k+1}}{(k+1)!}$$
for some $1 \leq k < n$ and $\theta>0$, then one has
$$ |S_\ell(y)|^2 \leq \frac{(C\theta)^\ell}{\ell!}$$
for all $k \leq \ell \leq n$ and some absolute constant $C$.  This is the form of the inequality that was originally conjectured by Lee.
\end{remark}

The approach in \cite{gy} to proving bounds such as \eqref{prev-bound} was based on the arithmetic-geometric mean inequality, which among other things implies that
\begin{equation}\label{amgm}
|s_n(y)|^{\frac{2}{n}} = (y_1^2 \dots y_n^2)^{\frac{1}{n}} \leq \frac{1}{n} (y_1^2 + \dots + y_n^2)
\end{equation}
for any real numbers $y_1,\dots,y_n$ (which are not required to be non-negative); this readily implies the $k=1, \ell=n$ case of \eqref{prev-bound}, and the general case can be deduced by using some well-known symmetry properties of the space of attainable tuples.  This is analogous to the standard proof of the Newton inequality \eqref{newton} that starts from the special case $k=1, n=2$ and then establishes the general case using the aforementioned symmetries. Our approach will proceed instead by exploiting the following inequality which, while simple to prove, has not appeared previously in the literature to the author's knowledge.

\begin{theorem}[New inequality]\label{thm-new}  Let $(s_0,\dots,s_n)$ be an attainable tuple.  Then for any $r>0$ and $1 \leq \ell \leq n$, one has
\begin{equation}\label{r1}
 \sum_{m=0}^\ell \binom{\ell}{m} |s_m| r^m \geq (1+ |s_\ell|^{2/\ell} r^2)^{\ell/2}
\end{equation}
or equivalently
\begin{equation}\label{r2}
 \sum_{m=0}^\ell \binom{\ell}{m} |s_m| r^{\ell-m} \geq (|s_\ell|^{2/\ell} +r^2)^{\ell/2}.
\end{equation}
\end{theorem}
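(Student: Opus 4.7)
The plan is to establish the equivalent form \eqref{r2}, first in the diagonal case $\ell = n$ and then to reduce the general case by a differentiation argument applied to the polynomial in \eqref{poly}.

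For the $\ell = n$ step, I would specialize \eqref{poly} to $z = \sqrt{-1}\, r$. Taking absolute values and applying the triangle inequality on the right then yields
$$ \prod_{j=1}^n \sqrt{r^2+y_j^2} \;=\; \left|\prod_{j=1}^n (\sqrt{-1}\, r - y_j)\right| \;\leq\; \sum_{k=0}^n \binom{n}{k} |s_k|\, r^{n-k}.$$
It remains to bound the left-hand product from below by $(r^2 + |s_n|^{2/n})^{n/2}$. This is exactly what Mahler's inequality $\prod_j (a_j+b_j)^{1/n} \geq \prod_j a_j^{1/n} + \prod_j b_j^{1/n}$ gives, applied with $a_j = r^2$ and $b_j = y_j^2$ (using $(\prod y_j^2)^{1/n} = |s_n|^{2/n}$) and then raised to the $n/2$-th power. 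Mahler's inequality is itself an elementary consequence of AM-GM applied to the ratios $a_j/(a_j+b_j)$ and $b_j/(a_j+b_j)$, so no deep ingredient is required.

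To handle the case $\ell < n$, I would exploit the fact that differentiating \eqref{poly} preserves reality of the roots by Rolle's theorem, and that a short computation identifies the properly normalized $(n-\ell)$-th derivative as $\sum_{k=0}^\ell (-1)^k \binom{\ell}{k} s_k z^{\ell-k}$, a monic polynomial of degree $\ell$ with all real roots whose elementary symmetric means (now normalized by $\binom{\ell}{k}$) are precisely $(s_0,\ldots,s_\ell)$. Thus the truncated tuple $(s_0,\ldots,s_\ell)$ is itself attainable for parameter $\ell$, and applying the already-proved $\ell=n$ case to it delivers \eqref{r2} in general. The substantive content therefore sits entirely in the $\ell=n$ case; the only clever move is the substitution $z=\sqrt{-1}\,r$ converting \eqref{poly} into an estimate for $\prod_j\sqrt{r^2+y_j^2}$, and the main potential obstacle is locating a sharp lower bound for that product, which Mahler's inequality provides. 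The reduction via Rolle's theorem is purely formal bookkeeping.
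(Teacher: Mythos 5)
Your proof is correct and mirrors the paper's argument almost exactly: the same specialization $z=ir$ in \eqref{poly}, the same triangle-inequality upper bound on $|P(ir)|$, and the same reduction to $\ell=n$ via Rolle's theorem / truncation. The only cosmetic difference is that you invoke Mahler's inequality where the paper applies Jensen's inequality to the convex function $t\mapsto\log(e^t+r^2)$ with $t_j=\log y_j^2$; since your $a_j=r^2$ are all equal, Mahler's inequality in this instance is precisely that Jensen statement, so the two arguments coincide.
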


We establish this inequality in Section \ref{new-ineq}.  Observe from \eqref{sk-form} and the binomial theorem that these inequalities are completely sharp (at least when $\ell=n$), thus giving support to the heuristic that the tuple \eqref{sk-form} has extremal behavior amongst the set of all attainable tuples.   Morally speaking, the estimates in Theorem \ref{thm-new} (working for instance in the model case $\ell=n$ and $|s_n|=1$, which is easy to reduce to) should then follow by applying \eqref{r1} with\footnote{This choice of $r$ is suggested by the law of large numbers (or the de Moivre--Laplace theorem), which among other things indicates that the binomial distribution $m \mapsto (1+r^2)^{-n/2} \binom{n/2}{m/2} 1_{m \text{ even}} r^m$ peaks at $m \approx \frac{r^2}{1+r^2} n$.} $r \asymp (k/n)^{1/2}$ and using \eqref{standard}, provided that we can somehow neglect the contribution of the $m \neq k,k+1$ terms on the left-hand side of \eqref{r1}.  This turns out to be possible after a certain amount of technical preparation (relying on the aforementioned symmetries of the space of attainable tuples) to pass to an ``extremal'' attainable tuple.

\begin{remark}  By the fundamental theorem of algebra, every tuple $(s_0,\dots,s_n)$ of complex numbers with $s_0=1$ becomes attainable if the $y_1,\dots,y_n$ are permitted to be arbitrary complex numbers.  Thus there are no non-trivial relations between the $s_i$ in this case.
\end{remark}

\begin{remark}
The author has formalized the proof of \Cref{thm-main} in {\tt Lean 4}: see

\centerline{\url{https://github.com/teorth/symmetric_project}.}
As a byproduct of this formalization, it was provably shown that
$$  |s_\ell|^{\frac{1}{\ell}} \leq C \max_{k' = k, k+1} \left( \frac{\ell}{k'}\right)^{1/2} |s_{k'}|^{\frac{1}{k'}}
$$
for all $1 \leq k < \ell \leq n$ and admissible $(s_0,\dots,s_n)$ with\footnote{Strictly speaking, the formalization uses a more complicated expression for $C$, namely the maximum of five separate constants including $160 \times e^7$, but one can easily verify numerically that $160 \times e^7$ is (by far) the largest of the constants appearing in the maximum.} $C \coloneqq 160 \times e^7 \approx 175461$, although this constant is far from sharp.
\end{remark}

\subsection{Acknowledgments}

The author is supported by NSF grant DMS-1764034.  We thank Oisin McGuinness and Chin Ho Lee for comments and corrections, and Jairo Bochi for some references.

\section{Preliminary bounds}\label{prelim-sec}

In this section we record some preliminary bounds, in particular establishing the bound \eqref{prev-bound} using the method from \cite{gy}.

The collection of attainable tuples enjoys some useful (and well-known) symmetries, which we shall rely on frequently in our arguments:

\begin{lemma}[Symmetries of attainable tuples]\label{syms}  Let $(s_0,\dots,s_n)$ be an attainable tuple.
    \begin{itemize}
        \item[(i)] (Scaling)  For any real $\lambda$, $(s_0, \lambda s_1, \dots, \lambda^n s_n)$ is attainable.
        \item[(ii)] (Reflection)  If $s_n \neq 0$, then $(1, s_{n-1}/s_n, \dots, s_0/s_n)$ is attainable. (In particular, if $|s_n|=1$, then $\pm (s_n,\dots,s_0)$ is attainable with $\pm$ the sign of $s_n$.)
        \item[(iii)] (Truncation) If $1 \leq \ell \leq n$, then $(s_0,\dots,s_\ell)$ is attainable.
    \end{itemize}
    \end{lemma}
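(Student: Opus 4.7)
The plan is to prove each of the three symmetries by exhibiting an explicit transformation on the underlying tuple $y = (y_1,\dots,y_n)$ whose associated elementary symmetric means realize the stated new tuple.

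For (i), I would simply observe that each term in the sum defining $s_k(y)$ is a product of exactly $k$ of the $y_i$, so replacing $y$ by $\lambda y = (\lambda y_1, \dots, \lambda y_n)$ yields $s_k(\lambda y) = \lambda^k s_k(y)$. This is immediate from the definition and requires no machinery.

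For (ii), the hypothesis $s_n \ne 0$ means $y_1 \cdots y_n = s_n \ne 0$, so none of the $y_i$ vanish. I would let $y'_i \coloneqq 1/y_i$ and compute $s_k(y')$ by pulling out the common factor $1/(y_1 \cdots y_n)$: the sum of products of $k$ reciprocals becomes, after clearing denominators, $\frac{1}{y_1 \cdots y_n}$ times the sum of products of the \emph{complementary} $n-k$ variables. Using $\binom{n}{k} = \binom{n}{n-k}$ this gives exactly $s_k(y') = s_{n-k}(y)/s_n(y)$, which is the claim. The parenthetical remark with $|s_n|=1$ follows by also scaling via (i) with $\lambda$ equal to the sign of $s_n$, taking into account the sign factor $\lambda^n$.

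Part (iii) is the main content, and I expect it to be the part of the argument that actually uses something beyond bookkeeping, namely Rolle's theorem. The plan is to reduce to the case $\ell = n-1$ and then iterate. Starting from the polynomial identity \eqref{poly}, differentiating $\prod_{i=1}^n (z-y_i)$ yields a polynomial with all real roots $y'_1,\dots,y'_{n-1}$ (by Rolle applied between consecutive $y_i$'s, after first reordering them). Differentiating the right-hand side of \eqref{poly} and using the identity $\binom{n}{k}(n-k) = n\binom{n-1}{k}$ shows that $\frac{1}{n}\frac{d}{dz}\prod_{i=1}^n(z-y_i) = \sum_{k=0}^{n-1}(-1)^k \binom{n-1}{k} s_k(y) z^{n-1-k}$, so the truncated tuple $(s_0,\dots,s_{n-1})$ is realized by $y'_1,\dots,y'_{n-1}$. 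Iterating $n-\ell$ times gives (iii). There are no serious obstacles here; the only slight care needed is ensuring that Rolle's theorem is applied correctly when the $y_i$ are not distinct (handled by a standard multiplicity/continuity argument, or by noting that a multiple root of multiplicity $m$ of $p$ contributes a root of multiplicity $m-1$ to $p'$).
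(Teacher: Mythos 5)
Your proposal is correct and follows essentially the same route as the paper: (i) and (ii) via the homogeneity and reflection identities for $s_k$, and (iii) via repeated differentiation of $\prod_i (z-y_i)$ together with Rolle's theorem. The only cosmetic difference is that you iterate one derivative at a time rather than taking the $(n-\ell)$-th derivative directly, which is immaterial.
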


    \begin{proof}  We can write $s_k = s_k(y_1,\dots,y_n)$ for some real $y_1,\dots,y_n$. The claims (i), (ii) are immediate from the homogeneity identity
        $$ s_k(\lambda y_1,\dots,\lambda y_n) = \lambda^k s_k(y_1,\dots,y_n)$$
        and the reflection identity
    $$ s_k(1/y_1,\dots,1/y_n) = s_{n-k}(y_1,\dots,y_n) / s_n(y_1,\dots,y_n)$$
    respectively for all $0 \leq k \leq n$ (note that the non-vanishing of $s_n(y_1,\dots,y_n)$ implies that all the $y_1,\dots,y_n$ are non-zero).  To prove (iii), observe from $n-\ell$ applications of Rolle's theorem that the degree $\ell$ polynomial
    $$ \frac{\ell!}{n!} \frac{d^{n-\ell}}{dx^{n-\ell}} \prod_{i=1}^n (z-y_i) = \sum_{k=0}^\ell (-1)^k \binom{\ell}{k} s_k(y_1,\dots,y_n) z^{\ell-k}$$
    is monic with all roots real, and hence the tuple $(s_0(y_1,\dots,y_n),\dots,s_\ell(y_1,\dots,y_n))$ is attainable.
\end{proof}

We may now prove \eqref{prev-bound}.  By Lemma \ref{syms}(iii) we may assume without loss of generality that $\ell=n$.  The bound is trivial or vacuous when $n \leq 2$, so we may assume $n \geq 3$.

From the Newton identity
$$
\sum_{i=1}^n y_i^2 = n^2 s_1(y_1,\dots,y_n)^2 - 2 \binom{n}{2} s_2(y_1,\dots,y_n)$$
and \eqref{amgm} we obtain the bound
$$ |s_n(y_1,\dots,y_n)|^{\frac{2}{n}} \leq n s_1(y_1,\dots,y_n)^2 - (n-1) s_2(y_1,\dots,y_n).$$
By the triangle inequality, we conclude for any attainable tuple $(s_0,\dots,s_n)$ that
$$ |s_n|^{\frac{2}{n}} \leq n s_1^2 + (n-1) |s_2|\leq \max_{k=1,2} 2n |s_k|^{\frac{2}{k}}.$$
(This already establishes\footnote{When combined with Lemma \ref{syms}(iii), it also gives the inequality $|s_\ell|^{\frac{2}{\ell}} \leq \ell s_1^2 - (\ell-1) s_2$ for any $2 \leq \ell \leq n$.  See \cite[Claim 22]{bijls} for a related bound.} the $k=1$ case of \eqref{prev-bound}.) Using Lemma \ref{syms}(ii), we conclude that
$$ |s_n|^{-\frac{2}{n}} \leq \max_{k=1,2} 2n (|s_{n-k}|/|s_n|)^{\frac{2}{k}}$$
if $s_n \neq 0$.  With our assumption $n \geq 3$, this rearranges as
$$ |s_n|^{\frac{1}{n}} \leq \max_{k=1,2} (2n)^{\frac{k}{2(n-k)}} |s_{n-k}|^{\frac{1}{n-k}}.$$
We may remove the hypothesis $s_n \neq 0$, since this inequality clearly holds when $s_n$ vanishes.

Iterating this bound (and using Lemma \ref{syms}(iii)) we may establish \eqref{prev-bound} for any fixed $n$. We may therefore assume henceforth that $n \geq 6$.

It will be convenient to replace the factor $(2n)^{\frac{k}{2(n-k)}}$ by an expression which will form a telescoping product. From the monotone decreasing convex nature of $t \mapsto \frac{1}{t-1}$ for $t > 1$, one has the inequality
$$ \frac{k}{2(n-k)} \leq \int_{n-k}^n \frac{dt}{2(t-1)} = \frac{1}{2} \log \frac{n-1}{n-k-1}$$
for $k=1,2$ and\footnote{In a previous version of the paper, the condition $n \geq 4$ was omitted, as the author failed to notice that the right-hand side was infinite (and thus not usable) for $n=3$ when $k=2$.  This error was detected by {\tt Lean4} in the course of the author's attempts to formally verify the arguments in this paper.} $n \geq 4$. we conclude that
$$ |s_n|^{\frac{1}{n}} \leq \max_{k=1,2} (2n)^{\frac{1}{2} \log \frac{n-1}{n-k-1}} |s_{n-k}|^{\frac{1}{n-k}}$$
for $n \geq 4$.
A routine induction on $n$ then shows that
\begin{equation}\label{nk}
 |s_n|^{\frac{1}{n}} \leq \max_{k'=k,k+1} (2n)^{\frac{1}{2} \log \frac{n-1}{n-k'-1}}  |s_{n-k'}|^{\frac{1}{n-k'}}
\end{equation}
whenever $1 \leq k \leq n-3$.  If we again make the temporary assumption that $s_n \neq 0$, we can use Lemma \ref{syms}(ii) to conclude that
$$ |s_n|^{-\frac{1}{n}} \leq \max_{k'=k,k+1} (2n)^{\frac{1}{2} \log \frac{n-1}{n-k'-1}}  (|s_{k'}|/|s_n|)^{\frac{1}{n-k'}}$$
which rearranges as
$$ |s_n|^{\frac{1}{n}} \leq \max_{k'=k,k+1} (2n)^{\frac{n-k'}{2k'} \log \frac{n-1}{n-k'-1}}  |s_{k'}|^{\frac{1}{k'}}.$$
We may again remove the condition $s_n \neq 0$ here, since the bound clearly holds when $s_n=0$.
In the region $k \leq n/2$ (which implies $k \leq n-3$ since $n \geq 6$), we have $\log \frac{n-1}{n-k'-1} \leq \frac{k'}{n-k'} + O( \frac{k'}{n^2})$ and hence
\begin{equation}\label{sk}
    |s_n|^{\frac{1}{n}} \ll n^{1/2} \max_{k'=k,k+1} |s_{k'}|^{\frac{1}{k'}}.
\end{equation}
In the region $n/2 < k \leq n-2$, we instead use \eqref{nk} (with $k$ replaced by $n-1-k$, noting that $n-1-k \leq n-3$ in this regime since $n \geq 6$) to obtain
$$
|s_n|^{\frac{1}{n}} \leq \max_{k'=k,k+1} (2n)^{\frac{1}{2} \log \frac{n-1}{k'-1}}  |s_{k'}|^{\frac{1}{k'}}.
$$
But $\frac{n-1}{k'-1} \leq 2 + O(\frac{1}{n})$, and so in this region we also have \eqref{sk} since $\log 2 \leq 1$.  Finally, \eqref{sk} trivially holds when $k=n-1$.  We have thus established \eqref{sk} for all $1 \leq k < n$.  Since we had previously reduced to the case $\ell=n$, we obtain \eqref{prev-bound} as required.

\begin{remark} It is not difficult to make the implied constant in \eqref{prev-bound} more explicit; see \cite[Theorem 2]{dhh} or \cite[Theorem 5.2]{mrt} for examples of such explicit versions of this bound (with slightly different normalizations).  In the authors formalization of this paper in {\tt Lean4}, the bound was established with the constant
$$ C \coloneqq \max( e^{4/e} \sqrt{2}, 2 \sqrt{7} ) = 6.160089\dots,
$$
although this bound could surely be improved with additional effort.
\end{remark}

\section{Proof of Theorem \ref{thm-new}}\label{new-ineq}

We now prove Theorem \ref{thm-new}.  As in the proof of \eqref{prev-bound}, we may invoke Lemma \ref{syms}(iii) and assume without loss of generality that $\ell=n$. We may assume that $s_n \neq 0$ since the desired bound is trivial otherwise, and then by Lemma \ref{syms}(i) we may normalize $|s_n|=1$.  The bounds \eqref{r1} and \eqref{r2} are clearly equivalent (either by replacing $r$ with $1/r$, or by using Lemma \ref{syms}(ii)), so we will content ourselves with proving \eqref{r2}.  Fix an attainable tuple $(s_0,\dots,s_n)$ with $|s_n|=1$ and a real number $r>0$.  We write $s_k = s_k(y)$ for some real $y_1,\dots,y_n$; from the normalization $|s_n|=1$ we see that the $y_i$ are non-zero.  We form the polynomial
\begin{equation}\label{pz}
 P(z) \coloneqq \prod_{j=1}^n (z-y_j)
\end{equation}
of one complex variable $z$.  From \eqref{poly} and the triangle inequality we have
$$ |P(ir)| \leq \sum_{k=0}^n \binom{n}{k} |s_k| r^{n-k}$$
so it will suffice to establish the lower bound
$$ \log |P(ir)| \geq \frac{n}{2} \log(1+r^2).$$
But by taking absolute values and then logarithms in \eqref{pz}, we may expand
$$ \log |P(ir)| = \frac{1}{2} \sum_{j=1}^n \log(y_j^2 + r^2).$$
From the convexity of $t \mapsto \log(e^t+r^2)$ for $t \in \R$ and Jensen's inequality one has
$$ \frac{1}{n} \sum_{j=1}^n \log(y_j^2 + r^2) \leq \log(e^{\frac{1}{n} \log y_j^2} + r^2).$$
But from the normalization $|s_n|=1$ we have $\sum_{j=1}^n \log y_j^2=0$.  The claim follows.

\begin{remark} It is tempting to use the contour integral representation
$$ (-1)^k \binom{n}{k} s_k = \frac{1}{2\pi i} \int_\gamma \frac{P(z)}{z^{n-k+1}}\ dz$$
for any $0 \leq k \leq n$ and any contour $\gamma$ winding once anti-clockwise around the origin, together with the saddle point method, to try to control $s_k$ in terms of bounds on $P(z)$.  We were not able to do this rigorously (mostly because the geometry of the subharmonic function $\log |P(z)|$ could be moderately complicated despite the singularities being constrained to the real axis), but eventually landed upon the the more elementary approach of estimating $P$ at a single complex value $ir$ as a workable substitute for the saddle point method.  On\footnote{We thank Jairo Bochi for these references.} the other hand, the saddle point method has been successfully been used to control elementary symmetric means of random variables \cite{halasz}, \cite{major}, leading to some inequalities relating such means to additional types of mean in the case of non-negative variables \cite{bip}.
\end{remark}

\section{Main argument}

Now we can establish \Cref{thm-main}.  The idea is to work with an ``extremal'' tuple and then show that the contributions of the $m \neq k,k+1$ terms in \eqref{r2} are negligible in that case.  In order to guarantee the existence of a (near)-extremizer, it is convenient to truncate the range of the parameters $k,\ell,n$.  Let $N$ be a large natural number, and let $A = A_N$ be the best constant for which one has the inequality
\begin{equation}\label{A-def}
 |s_\ell|^{\frac{1}{\ell}} \leq A \max_{k' = k, k+1} \left( \frac{\ell}{k'}\right)^{1/2} |s_{k'}|^{\frac{1}{k'}}
\end{equation}
whenever $1 \leq k \leq \ell \leq n \leq N$ (with $k<n$) and $(s_0,\dots,s_n)$ is attainable.  By \eqref{prev-bound}, $A$ is finite (this is the only place in the argument where we exploit the upper bound of $N$).  It will suffice to show that $A$ is bounded uniformly in $N$.  Thus, we may assume for sake of contradiction that $A$ is larger than any specified constant.

By Lemma \ref{syms}(ii), (iii), we have
$$ |1/s_\ell|^{\frac{1}{\ell}} \leq A \max_{k' = k, k+1} \left( \frac{\ell}{k'}\right)^{1/2} |s_{\ell-k'}/s_\ell|^{\frac{1}{k'}}$$
whenever $1 \leq k <\ell \leq n \leq N$ and $(s_0,\dots,s_N)$ is attainable with $s_\ell \neq 0$.  This can be rearranged as
\begin{equation}\label{flip}
 |s_\ell|^{\frac{1}{\ell}} \leq \max_{k' = k, k+1} A^{\frac{k'}{\ell-k'}} \left( \frac{\ell}{k'}\right)^{\frac{k'}{2(\ell-k')}} |s_{\ell-k'}|^{\frac{1}{\ell-k'}}.
\end{equation}
Obviously this bound also holds in the $s_\ell=0$ case, so the condition $s_\ell \neq 0$ may then be removed.

The bounds \eqref{A-def}, \eqref{flip} involve positive powers of the potentially unbounded quantity $A$, which at first glance renders them unsuitable for our application.  However, we can counteract these losses by working with an ``extremal'' tuple that gains back a negative power of $A$.  Indeed, by definition of $A$, we can find $1 \leq k \leq \ell \leq n \leq N$ and an attainable tuple $(s_0,\dots,s_N)$  such that
\begin{equation}\label{near}
 |s_\ell|^{\frac{1}{\ell}} > e^{-1/N} A \max_{k' = k, k+1} \left( \frac{\ell}{k'}\right)^{1/2} |s_{k'}|^{\frac{1}{k'}}
\end{equation}
(say); one should think of this tuple as being ``near-extremal'' for the inequality \eqref{A-def}.  From \eqref{prev-bound} and the largeness of $A$, we may assume that $k$ (and hence $\ell, n, N$) is larger than any specified absolute constant; we can also assume that $k+2 \leq \ell$ since the cases $\ell = k, k+1$ are not compatible with $A$ being large. By Lemma \ref{syms}(iii) we may assume without loss of generality that $\ell=n$.  From \eqref{near} we then have $s_n$ non-zero, and by Lemma \ref{syms}(ii) we may assume that we have the normalization $|s_n|=1$.  Thus we now have
\begin{equation}\label{s-major}
 |s_{k'}|^{\frac{1}{k'}} \leq e^{1/N} A^{-1} \left( \frac{k'}{n}\right)^{1/2}
\end{equation}
for $k'=k,k+1$; note in particular the negative power of $A$ on the right-hand side. In particular this means that $k \leq n-2$, since $|s_n|=1$ and $A$ is large.  On the other hand, if we apply \eqref{flip} with $k$ replaced by $n-1-k$ and $\ell$ replaced by $n$, then
\begin{align*}
|s_n|^{\frac{1}{n}} &\leq \max_{k' = n-1-k, n-k} A^{\frac{k'}{n-k'}} \left( \frac{n}{k'}\right)^{\frac{k'}{2(n-k')}} |s_{n-k'}|^{\frac{1}{n-k'}} \\
&= \max_{k' = k, k+1} A^{\frac{n-k'}{k'}} \left( \frac{n}{n-k'}\right)^{\frac{n-k'}{2k'}} |s_{k'}|^{\frac{1}{k'}}
\end{align*}
and hence on substituting \eqref{s-major} and $|s_n|=1$ we conclude that
$$1 \ll A^{\frac{n-2k'}{k'}} \left( \frac{n}{n-k'}\right)^{\frac{n-k'}{2k'}}
\left( \frac{k'}{n}\right)^{\frac{1}{2}}
$$
for some $k'=k,k+1$.  If $k \geq 2n/3$, then the terms $\left( \frac{n}{n-k'}\right)^{\frac{n-k'}{2k'}}$ and $\left( \frac{k'}{n}\right)^{\frac{1}{2}}$ are bounded and $\frac{n-2k'}{k'} \leq -\frac{1}{2}$, leading to a contradiction since $A$ is large.  Thus we may assume\footnote{In fact we could improve this to $k \leq n/2$ with a little more effort, but for our arguments the only thing that is important is that we can ensure that $n-k' \asymp n-k \asymp n$ for $k'=k,k+1$.} that $k < 2n/3$.

From Theorem \ref{new-ineq} and our choice of normalizations, we have the inequality
\begin{equation}\label{contra}
    \sum_{m=0}^n \binom{n}{m} |s_m| r^m \geq (1+r^2)^{n/2}
\end{equation}
for any $r>0$.  We will show that for $A$ large enough, this bound is inconsistent with \eqref{s-major}, \eqref{A-def}, \eqref{flip} for a suitable choice of $r$.  In order to do this, we must first obtain upper bounds for $\binom{n}{m} |s_m|$ for all $0 \leq m \leq n$ (not just $m=k,k+1$).

In the region $k \leq m \leq n$, we may simply apply \eqref{A-def} and \eqref{s-major} to obtain the upper bound\footnote{We adopt the convention that the asymptotic $O()$ notation is applied before exponentiation if the exponent appears outside the parentheses; thus for instance $O(\frac{m}{n})^{m/2}$ denotes a quantity bounded by $(C \frac{m}{n})^{m/2}$ for some absolute constant $C>0$.}
$$
|s_m| \leq O\left(\frac{m}{n}\right)^{m/2}
$$
and hence by \eqref{standard}
\begin{equation}\label{mbound}
   \binom{n}{m} |s_m| \leq O\left(\frac{n}{m}\right)^{m/2}.
\end{equation}

Now we consider the problem of bounding $s_m$ in the complementary range $0 \leq m < k$.  To do this, we apply \eqref{flip} to the reversed sequence $\pm (s_n,\dots,s_0)$ (which is attainable for some choice of sign $\pm$ thanks to Lemma \ref{syms}(ii) and the normalization $|s_n|=1$) and with $k, \ell$ replaced by $k-m, n-m$ respectively to conclude that
\begin{align*}
     |s_m|^{\frac{1}{n-m}}
     &= \max_{k' = k-m, k-m+1} A^{\frac{k'}{n-m-k'}} \left( \frac{n-m}{k'}\right)^{\frac{k'}{2(n-m-k')}} |s_{k'+m}|^{\frac{1}{n-m-k'}} \\
     &= \max_{k' = k, k+1} A^{\frac{k'-m}{n-k'}} \left( \frac{n-m}{k'-m}\right)^{\frac{k'-m}{2(n-k')}} |s_{k'}|^{\frac{1}{n-k'}}
\end{align*}
and thus by \eqref{s-major} (and noting that $\frac{(n-m)k'}{n-k}= O(k') = O(N)$ since $k \leq 2n/3$ and $n \leq N$ is large)
$$ |s_m| \ll A^{\frac{(k'-m) (n-m)}{n-k'}} \left( \frac{n-m}{k'-m}\right)^{\frac{(k'-m)(n-m)}{2(n-k')}} \left( A^{-1} \left(\frac{k'}{n}\right)^{1/2} \right)^{\frac{(n-m)k'}{n-k'}}$$
for some $k'=k,k+1$.
This bound is somewhat complicated, so we shall now simplify the right-hand side (conceding some factors roughly of the form $O(1)^m$, which will turn out to be acceptable losses).
From the trivial bound $\frac{n-m}{k'-m} \leq \frac{n}{k'-m}$, we conclude that
$$ |s_m| \ll A^{-\frac{m (n-m)}{n-k'}} \left( \frac{n}{k'-m}\right)^{\frac{(k'-m)(n-m)}{2(n-k')}} \left(\frac{k'}{n}\right)^{\frac{(n-m)k'}{2(n-k')}}.$$
Since $\frac{n-m}{n-k'} \geq 1$, we may replace the $A^{-\frac{m (n-m)}{n-k'}} $ term with $A^{-m}$.
Next, we use the Bernoulli inequality bound
$$ \left(\frac{k'}{k'-m}\right)^{k'-m} \leq \exp\left( \frac{m}{k'-m} (k'-m) \right) = e^m$$
to conclude that
$$ |s_m| \ll O(A^{-1})^m \left( \frac{n}{k'}\right)^{-\frac{m(n-m)}{2(n-k')}}.$$
By \eqref{standard}, we thus have
\begin{align*}
    \binom{n}{m} |s_m| &\ll O(A^{-1})^m \left( \frac{n}{m} \right)^m \left( \frac{n}{k'}\right)^{-\frac{m(n-m)}{2(n-k')}} \\
    &= O\left( \frac{k'}{Am} \right)^m  \left( \frac{n}{k'}\right)^{\frac{m}{2} - \frac{m(k'-m)}{2(n-k')}}.
\end{align*}
Since $\frac{m(k'-m)}{n-k'} \geq 0$, $\frac{n}{k'} \geq 1$, and $k' \asymp k$, we conclude that
\begin{equation}\label{m-1}
    \binom{n}{m} |s_m| \leq O\left( \frac{k}{Am} \right)^m \left( \frac{n}{k}\right)^{\frac{m}{2}}
\end{equation}
for $m \geq 1$.  For $m=0$ the left-hand side is of course equal to $1$.

We now insert the bounds \eqref{mbound}, \eqref{m-1} into \eqref{contra} with
$$ r \coloneqq \delta \left( \frac{k}{n}\right)^{1/2}$$
and $\delta>0$ a small absolute constant to be chosen later.  We conclude that
\begin{align*}
    (1 + \delta^2 k/n)^{n/2} &\leq 1 + \sum_{1 \leq m < k} O\left( \frac{k \delta}{Am} \right)^m +
    \sum_{m=k}^\infty O\left(\frac{\delta k^{1/2}}{m^{1/2}}\right)^m \\
     &\leq 1 + \sum_{m=1}^\infty \frac{O(k\delta/A)^m}{m!} + \sum_{m=k}^\infty O(\delta)^m \\
     &\leq \exp( O(\delta k/A) ) + 2^{-k}
\end{align*}
(say) if $\delta$ is small enough; on the other hand, we have
$$ 1+\delta^2 k/n \geq \exp( \delta^2 k/2n )$$
if $\delta$ is small enough.  We conclude that
$$ \exp( O(\delta k/A) ) + 2^{-k} \geq \exp(\delta^2 k / 4).$$
If $A$ is sufficiently large depending on $\delta$, this implies
$$ \exp( \delta^2 k / 8 ) + 2^{-k} \geq \exp(\delta^2 k / 4),$$
but this leads to a contradiction if $k$ is large enough.  This completes the proof of Theorem \ref{thm-main}.

\end{document}